\theoremstyle{plain}
\newtheorem{theorem}{Theorem}
\newtheorem{corollary}[theorem]{Corollary} 
\newtheorem{proposition}[theorem]{Proposition}
\newtheorem*{definition}{Definition}
\theoremstyle{definition}
\newtheorem{remark}[theorem]{Remark}
\newcommand{\enm}[1]{\ensuremath{#1}}          %
\newcommand{\cal}[1]{\mathcal{#1}}
\newcommand{\PP}{\enm{\mathbb{P}}}
\newcommand{\Ii}{\enm{\cal{I}}}
\newcommand{\Oo}{\enm{\cal{O}}}
\newcommand{\TT}{\mathbb{T}}
\renewcommand{\phi}{\varphi}
\renewcommand{\theta}{\vartheta}
\renewcommand{\epsilon}{\varepsilon}
\DeclareMathOperator{\reg}{reg}
\renewcommand{\to}[1][]{\xrightarrow{\ #1\ }}
\newcommand{\old}[1]{}
\date{}
\subjclass[2020]{14N05}
\keywords{Secant variety, Terracini locus.}
\title{A note on very ample Terracini loci}
\author{Edoardo Ballico}
\address{Universit\`{a} di Trento, Via Sommarive 14,  38123 Povo (Trento), Italy}
\email{edoardo.ballico@unitn.it}
\author{Emanuele Ventura}
\address{Politecnico di Torino, Dipartimento di Scienze Matematiche ``G. L. Lagrange'', Corso Duca degli Abruzzi 24\\
10129 Torino, Italy}
\email{emanuele.ventura@polito.it}
\begin{document}

\maketitle

\begin{abstract}
In this short note we show that, for any ample embedding of a variety of dimension at least two in a projective space, all high enough degree Veronese re-embeddings have non-empty Terracini loci.  
\end{abstract}

\section{Introduction}
Terracini loci were introduced by the first author and Chiantini in \cite{BC}. Their emptiness implies non-defectivity of secant varieties due to the celebrated Terracini's lemma, whereas the converse is not true: there exist non-empty Terracini loci even in the presence of non-defective secants. This triggered the interest for this geometric notion, leading to the results in the aforementioned article. The Terracini locus has been the subject of recent investigations \cite{BBS, CG}, especially for Segre and Veronese varieties, that are crucial in the context of tensors. We start off by defining set-theoretically these loci. 

\begin{definition}
Let $X\subset \PP^N$ be a non-degenerate projective variety of dimension $n\geq 1$ over an algebraically closed field $\mathbb K$. Let $S\subset X_{\mathrm{reg}}$ be a finite subset of smooth points of $X$ whose cardinality is $k$. Let $(2S,X)$ be the union of the corresponding $2$-fat points $(2p,X)$ supported at the points $p\in S$. Then $S$ is in the $k$th {\it Terracini locus} $\TT_k(X)$ if and only if $h^0(\Ii _{(2S,X)}(1))>0$ and $h^1(\Ii_{(2S,X)}(1)) >0$. Equivalently, $S$ is in $\TT_k(X)$ whenever the $n$-dimensional tangent spaces $T_pX$, for $p\in S$, are linearly dependent and their projective linear span is not the ambient space $\PP^N$.
\end{definition}

A consequence of a deep result of Alexander and Hirschowitz \cite[Theorem 1.1 and Corollary 1.2]{AH} (where in their notation one chooses $m=2$) states that for any projective variety $X$ there exists a very ample embedding such that all the secant varieties of $X$ under this embedding are non-defective.  The aim of this note is to point out that, even in this very ample regime, the emptiness of the corresponding Terracini locus {\it does not} generally hold. Thus we answer 
in the negative the question whether a statement similar to the one by Alexander and Hirschowitz works for Terracini loci. \\

\section{Very ample regime}

Let  $\mathbb K$ be an algebraically closed field and let $X$ be a projective variety of dimension $n$ over $\mathbb K$. We say that an embeddeding $X\subset \PP^r$ of $X$ is not secant defective if for each positive integer $k$ the $k$-secant variety of $X$ has dimension $\min \{r,k(n+1)-1\}$. For a very ample line bundle $L$ on $X$, let $\nu_L: X\to |L|^\vee$ denote the associated embedding. The $k$th secant variety and the $k$th Terracini locus of $\nu_L(X)$ are denoted $\sigma _k(\nu_{L}(X))$ and $\TT_k(\nu_{L}(X))$, respectively.  \\
We say that $\nu_{L}(X)$ is {\it secant non-defective} if $\sigma _k(\nu_{L}(X))$ is non-defective for {\it every} $k\geq 1$. 

\begin{theorem}\label{main}
Let $n\geq 2$ and $X$ be as above. Let $F, L\in \mathrm{Pic}(X)$, where $L$ is an ample line bundle. Then there exists an integer $m_0$ \textnormal{(}depending only on $X$, $F$, $L$\textnormal{)} such that for all $m\ge m_0$ the line bundle $F+mL$ is very ample, $\nu_{F+mL}(X)$ is secant non-defective, and there exists $k>0$ such that $\sigma _k(\nu_{F+mL}(X))\ne |F+mL|^\vee$ and $\TT_k(\nu_{F+mL}(X)) \ne \emptyset$.
\end{theorem}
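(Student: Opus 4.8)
The plan is to dispatch very ampleness and non-defectivity quickly, and to devote the real work to producing a single explicit point of the Terracini locus. For any $F$ and ample $L$, the bundle $F+mL$ is very ample for $m\ge m_0$ by Serre's theorem, so the first assertion is immediate; moreover for $m\gg 0$ all higher cohomology of $F+mL$ vanishes, so $N+1=h^0(F+mL)=\chi(F+mL)$ is a polynomial in $m$ of degree $n$ with leading term $\tfrac{L^{n}}{n!}\,m^{n}$. Secant non-defectivity of $\nu_{F+mL}(X)$ for every $k$ (uniformly for $m\ge m_0$) I would take from the known non-defectivity of sufficiently positive embeddings; alternatively it can be obtained by running the residuation method below in the ``independent conditions'' direction for general points. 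Granting non-defectivity, $\dim\sigma_k(\nu_{F+mL}(X))=\min\{N,\,k(n+1)-1\}$ by Terracini's lemma, so $\sigma_k\ne|F+mL|^\vee$ is \emph{equivalent} to $k(n+1)\le N$.

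The heart is therefore to exhibit, for a suitable $k$ with $k(n+1)\le N$, a finite set $S$ with $h^1(\Ii_{(2S,X)}(1))>0$. Indeed, from $0\to\Ii_{2S}(F+mL)\to\Oo_X(F+mL)\to\Oo_{2S}(F+mL)\to0$ and $H^1(F+mL)=0$ one gets $h^0(\Ii_{2S}(F+mL))=h^1(\Ii_{2S}(F+mL))+(N+1)-k(n+1)$, so once $k(n+1)\le N$ the inequality $h^1>0$ forces $h^0>0$ as well, whence $S\in\TT_k$. To manufacture the dependency I would place all $k$ points on a curve and descend. Choose $b\gg0$ so that $A:=bL$ is very ample, let $C=Y_1\cap\cdots\cap Y_{n-1}$ be a smooth irreducible curve cut out by general members $Y_i\in|A|$ (Bertini), and take $S\subset C$ to be any $k$ distinct points. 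For a smooth point $p$ of $X$ on a smooth divisor $Y$ one has $\mathrm{Res}_Y(2p)=p$ and $(2p)\cap Y=(2p,Y)$, so residuation along $Y_1,\dots,Y_{n-1}$ in turn yields, at the first step, an exact sequence
\[
0\longrightarrow \Ii_{S}(F+mL-A)\longrightarrow \Ii_{2S}(F+mL)\longrightarrow \Ii_{(2S,Y_1)}\big((F+mL)|_{Y_1}\big)\longrightarrow 0,
\]
and analogous sequences on $Y_1$, $Y_1\cap Y_2$, and so on. At every stage the residual term is $\Ii_{S}$ twisted by a restriction of $F+mL-A=F+(m-b)L$, which is positive for $m$ large, so its $H^2$ vanishes (Serre vanishing when the dimension is $\ge3$, and $H^2(\mathrm{surface},\Oo(\mathrm{positive}))=0$ by Serre duality when the dimension is $2$). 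Hence at each stage the restriction map on $H^1$ is surjective, and the inequalities $h^1(\text{upstairs})\ge h^1(\text{downstairs})$ propagate all the way down to $C$.

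On the curve the problem becomes $h^1\big(C,(F+mL)|_C(-2S)\big)>0$. Since $\deg_C(F+mL)=(F+mL)\cdot A^{n-1}=b^{n-1}(F+mL)\cdot L^{n-1}$ is \emph{linear} in $m$, I would choose $k$ with $2k\ge\deg_C(F+mL)+2$, so that $(F+mL)|_C(-2S)$ has degree $\le-2$; then $h^0=0$ and Riemann--Roch gives $h^1=g_C-1-\deg\ge 1>0$ with no further genericity needed. By the previous paragraph $h^1(\Ii_{(2S,X)}(1))>0$. It remains to check the compatibility $k(n+1)\le N$: the chosen $k$ grows like $\tfrac12 b^{n-1}L^n\,m$, i.e. linearly in $m$, while $N$ grows like $m^{n}$, so for $n\ge2$ (which is exactly where the hypothesis enters) one has $k(n+1)\le N$ for all $m\ge m_0$. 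This delivers simultaneously $\sigma_k\ne|F+mL|^\vee$ and $S\in\TT_k$.

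\textbf{Main obstacle.} The delicate point is propagating $h^1>0$ \emph{upward} through the residuation tower: Horace arguments usually certify $h^1=0$, and preserving strict positivity requires the vanishing of the relevant $H^2$ at every intermediate variety together with genuine smoothness (via Bertini) and admissibility of $S$ on each $C\subset\cdots\subset Y_1\subset X$. Keeping these hypotheses simultaneously valid as $m$ varies, and separately securing secant non-defectivity for \emph{all} $k$ with a single threshold $m_0$, is where the care is needed.
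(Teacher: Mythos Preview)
Your strategy is sound and the argument is essentially correct: place the $k$ points on a fixed curve, use that $h^0$ restricted to the curve grows linearly in $m$ while $h^0(F+mL)$ grows like $m^n$, and exploit $n\ge 2$ to make the constraint $k(n+1)\le N$ compatible with forcing dependency on the curve. This is precisely the skeleton of the paper's proof. The difference is in how the inequality $h^1>0$ is transported from the curve back up to $X$.

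You climb a residuation tower $X\supset Y_1\supset\cdots\supset C$, needing at each step the vanishing of an $H^2$ of a residual ideal and enough smoothness (via Bertini, complete intersections) for the trace/residual calculus to behave. This works, but it is heavier than necessary, and it is exactly the point you flag as the main obstacle. The paper bypasses the tower entirely with a one-line observation: if $Z\subset W$ are zero-dimensional subschemes of $X$, then from
\[
0\longrightarrow \Ii_W(1)\longrightarrow \Ii_Z(1)\longrightarrow \Ii_Z(1)/\Ii_W(1)\longrightarrow 0
\]
and the fact that the cokernel is supported in dimension zero (hence has no $H^1$), one obtains a surjection $H^1(\Ii_W(1))\twoheadrightarrow H^1(\Ii_Z(1))$. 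Taking $W=(2S,X)$ and $Z=(2S,X)\cap Y$ immediately lifts $h^1>0$ from the curve to $X$ in a single step, with no Bertini, no complete-intersection structure, no intermediate varieties, and no $H^2$ vanishing. Consequently the curve $Y$ can be \emph{any} integral curve meeting $X_{\reg}$, possibly singular. On $Y$ the paper then argues geometrically rather than via Riemann--Roch on $C$: the span $\langle Y\rangle$ has dimension at most $h^0(Y,(F+mL)|_Y)-1$, so choosing $k-1=\lceil h^0(Y,(F+mL)|_Y)/2\rceil$ forces the degree-$\ge 2k$ scheme $(2S,X)\cap Y\subset\langle Y\rangle$ to be linearly dependent.

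In short: same idea, but the paper's propagation step is strictly simpler and dissolves exactly the obstacle you were worried about.
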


\begin{proof}
Let $L = \mathcal L(D)$ and define $\alpha = D\cdots D>0$, the $n$ times self-intersection of the Cartier divisor $D$. Fix an integral curve $Y\subset X$ such that $Y\cap X_{\reg}\ne \emptyset$, where $Y$ is possibly singular. Let $\beta = Y\cdot D\cdots D$, the intersection of $Y$ with $n-1$ copies of $D$, i.e. $\beta = \deg (L_{|Y})$ and $\beta>0$ because $L$ is ample. Fix a real number $\epsilon$ such that $\alpha > \epsilon >0$. By the result of  Alexander and Hirschowitz \cite[Theorem 1.1]{AH}, by the asymptotic Riemann-Roch and by the ampleness of $L$, we find  an integer $m_1$ such that for all $m\ge m_1$ we have that: $F+mL$ is very ample, $\nu_{F+mL}(X)$ is secant non-defective, and $h^0(F+mL) \ge \frac{\alpha-\epsilon}{n!}m^n$. 

Thus, for $1\le k <\left\lfloor \frac{\alpha  -\epsilon}{(n+1)!}m^n\right\rfloor$, we have $\sigma _k(\nu_{F+mL}(X))\subsetneq |F+mL|^\vee$. By the asymptotic Riemann-Roch, $h^0(Y, (F+mL){|_Y})$ grows like a linear function of the form $\beta m$. Therefore there exists $m_0\geq m_1$ such that for all $m\geq m_0$ one has $1\leq h^0(Y, (F+mL){|_Y})/2 < \left\lfloor \frac{\alpha-\epsilon}{(n+1)!}m^n\right\rfloor$. 

Define $k-1= \lceil h^0(Y, (F+mL){|_Y})/2\rceil$. Note that the projective linear span of the curve $Y$ has dimension $\dim \langle Y\rangle \leq 2k-3$. Fix a set $S\subset Y\cap X_{\reg}$ with cardinality $k$. The zero-dimensional scheme $(2S,X)\cap Y\subset Y$ has degree at least $2k$. Hence, if $(2S,X)\cap Y\subset Y$ was linearly independent, then its projective linear span would be at least $(2k-1)$-dimensional. Therefore $(2S,X)\cap Y$ is linearly dependent, i.e. $h^1(\Ii_{(2S,X)\cap Y}(1))>0$. Moreover, since $k <\left\lfloor \frac{\alpha  -\epsilon}{(n+1)!}m^n\right\rfloor$ and $\deg ((2S,X))  = k(n+1)$, the projective linear span of this scheme cannot fill the ambient space, i.e. one has $h^0(\Ii_{(2S,X)}(1)) >0$.

Now, let $Z\subset W$ be two zero-dimensional schemes. Then one has the exact sequence of sheaves 
\[
0\longrightarrow \Ii_W(1)\longrightarrow \Ii_Z(1) \longrightarrow \Ii_Z(1)/\Ii_W(1) \longrightarrow 0. 
\]
Here the cokernel sheaf is either zero or supported on a zero-dimensional scheme. Taking the long exact sequence in cohomology, we then find a surjective 
map in cohomology $H^1(\Ii_W(1))\twoheadrightarrow H^1(\Ii_Z(1))$. The zero-dimensional scheme $(2S,X)\cap Y$ is a closed subscheme of $(2S,X)$ and so we likewise have 
a surjection
\[
H^1(\Ii_{(2S,X)}(1))\twoheadrightarrow H^1(\Ii_{(2S,X)\cap Y}(1)). 
\]
Therefore $h^1(\Ii_{(2S,X)}(1))>0$ too. So any collection of $k$ smooth points of $Y\cap X_{\reg}$ is in the $k$th Terracini locus of $\nu_{F+mL}(X)$.
\end{proof}

\begin{remark}
Let $X\subset \PP^N$ be a projective variety with $\dim X=n\geq 2$ and consider $\nu_d(X)$. For any integer $k>0$, the set $S^k\nu_d(X_{\mathrm{reg}})$ of all subsets of $\nu_d(X_{\mathrm{reg}})$  with cardinality $k$ is a variety of dimension $kn$. For $d\gg 0$, the families of $S\in \TT_k(\nu_d(X))$ we found in the proof of Theorem \ref{main} on a fixed curve $Y$ have codimension $k$ in $S^k\nu_d(X_{\mathrm{reg}})$. Varying $Y$, we do not decrease significantly the codimension of $\TT_k(\nu_d(X))$ in $S^k\nu_d(X_{\mathrm{reg}})$: the magnitude of this is $O(k)$. We do not have examples for which, when $k$ is increasing with $d$, $\TT_k(\nu_d(X))$ has codimension $1$ in  $S^k\nu_d(X_{\mathrm{reg}})$, which is the least codimension allowed in view of the secant non-defectivity result in \cite{AH}. 
\end{remark}

\begin{proposition}\label{uu1}
Let $N\geq 1$ and let $C\subset \PP^N$ be a smooth and non-degenerate rational curve of degree $d$. For all $d'\ge d+1-N$, the curve $\nu_{d'}(C)\subset \langle \nu_{d'}(C)\rangle$ has empty Terracini loci.
\begin{proof}
Suppose $N=d=1$ so that $C=\PP^1$. For $d'\geq 1$, consider the rational normal curve $\nu_{d'}(\PP^1)$. Its $k$th Terracini locus 
consists of those subsets $S\subset \PP^1$ such that $(2S, \nu_{d'}(\PP^1))$ does not span $\langle \nu_{d'}(C)\rangle$, i.e. $h^0(\Ii _{2S}(d')) >0$, and such that $h^1(\Ii_{2S}(d'))> 0$. Since $C=\PP^1$,
for any zero-dimensional scheme $Z\subset C$ either $h^0(\Ii_Z(d')) =0$ or $h^1(\Ii_Z(d')) =0$.
Hence any Terracini locus of the rational normal curve $\nu_{d'}(C)$ is empty. 

For the general case, let $d\geq 2$ and $d'\ge d+1-N$. One has $h^1(\Ii_C(d')) =0$ \cite[Theorem p. 492]{GLP}. Hence $\nu_{d'}(C)$ is an embedding of $\PP^1$ by the complete linear system $|\Oo_{\PP^1}(d\cdot d')|$. So this has empty Terracini loci by the first part. 
\end{proof}
\end{proposition}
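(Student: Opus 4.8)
The plan is to reduce the statement to the case of the rational normal curve, where it becomes elementary; the real content lies in justifying the reduction. Since $C$ is smooth and rational, $C\cong \PP^1$ and the degree-$d$ embedding $C\subset \PP^N$ is induced by a linear subsystem of $|\Oo_{\PP^1}(d)|$. The $d'$-th Veronese re-embedding restricts on $C$ to the line bundle $\Oo_C(d')=\Oo_{\PP^1}(dd')$, and the span $\langle \nu_{d'}(C)\rangle$ is the projectivization of the image of the restriction map $H^0(\PP^N,\Oo(d'))\to H^0(C,\Oo_C(d'))$. First I would argue that, under the hypothesis $d'\ge d+1-N$, this map is surjective, so that $\nu_{d'}(C)$ is cut out by the \emph{complete} linear system $|\Oo_{\PP^1}(dd')|$ and is therefore the rational normal curve of degree $dd'$ in $\PP^{dd'}$.

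For the surjectivity I would pass to the equivalent vanishing $h^1(\Ii_C(d'))=0$, where $\Ii_C$ is the ideal sheaf of $C$ in $\PP^N$; this comes from the long exact cohomology sequence of $0\to \Ii_C(d')\to \Oo_{\PP^N}(d')\to \Oo_C(d')\to 0$ together with $h^1(\PP^N,\Oo(d'))=0$. The needed vanishing is exactly the Gruson--Lazarsfeld--Peskine regularity bound \cite{GLP}: a non-degenerate integral curve of degree $d$ in $\PP^N$ satisfies $h^1(\Ii_C(j))=0$ for all $j\ge d+1-N$. This is the only nontrivial external input, and it is the step I regard as the crux of the reduction.

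It then remains to treat the rational normal curve. The key observation is that every zero-dimensional scheme $Z\subset \PP^1$ is an effective Cartier divisor, so $\Ii_Z(e)\cong \Oo_{\PP^1}(e-\deg Z)$ is a line bundle, and a line bundle on $\PP^1$ can have at most one of $h^0$ and $h^1$ positive, according to whether its degree is non-negative or negative. Applying this with $Z=2S$ (of degree $2k$) and $e=dd'$, and using linear normality to match the ambient cohomology $h^i(\Ii_{2S}(1))$ with $h^i(\Oo_{\PP^1}(dd'-2k))$, the two conditions $h^0(\Ii_{2S}(1))>0$ and $h^1(\Ii_{2S}(1))>0$ defining $\TT_k$ cannot hold simultaneously; hence every Terracini locus of $\nu_{d'}(C)$ is empty. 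I would emphasize that completeness of the embedding is genuinely necessary for this dichotomy: for a proper subsystem the clean identification $\Ii_Z=\Oo_{\PP^1}(e-\deg Z)$ no longer controls the relevant cohomology, which is precisely why the GLP step cannot be bypassed.
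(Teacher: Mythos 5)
Your proposal is correct and follows essentially the same route as the paper: reduce to the rational normal curve via the Gruson--Lazarsfeld--Peskine vanishing $h^1(\Ii_C(d'))=0$ for $d'\ge d+1-N$ (so the re-embedding is by the complete system $|\Oo_{\PP^1}(dd')|$), then observe that on $\PP^1$ a zero-dimensional scheme twists the line bundle down to $\Oo_{\PP^1}(dd'-\deg Z)$, for which $h^0$ and $h^1$ cannot both be positive. Your write-up is somewhat more explicit about why the dichotomy holds and why completeness of the linear system is needed, but the decomposition and the key inputs are identical.
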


The case of curves with positive arithmetic genus is treated in the following proposition. Here different behaviours appear according to the parity of the degree. 

\begin{proposition}\label{arithgenus>0}
Let $C$ be an integral projective curve over $\mathbb K$, with $\mathrm{char}(\mathbb K)\neq 2$, whose arithmetic genus is $g>0$. Let $F$ and $L$ be line bundles on $C$, where $L$ is ample, of degrees $\alpha = \deg (L)$ and $\beta = \deg (F)$. For each integer $m>0$, consider the complete linear system $|F +mL|$. Assume that $\beta +m\alpha \geq 4g+2$  and assume that $\beta +m\alpha$ is even. Then 
$\nu_{F+mL}(C)$ has a non-empty Terracini locus.
\end{proposition}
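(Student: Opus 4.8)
The plan is to translate the two Terracini conditions into the vanishing/non-vanishing of cohomology of a single line bundle on $C$, and then to force that line bundle to be trivial. Write $D=F+mL$ and $d=\deg D=\beta+m\alpha$. Since $d\ge 4g+2\ge 2p_a(C)+1$, the bundle $\mathcal O_C(D)$ is very ample and $h^1(\mathcal O_C(D))=0$, so $\nu_D$ embeds $C$ by the complete system $|D|$. For a finite reduced set $S\subset C_{\mathrm{reg}}$ of cardinality $k$, the fat-point scheme $(2S,C)$ is the Cartier divisor $2S=\sum_{p\in S}2p$ of degree $2k$, and pulling back $\mathcal O_{\PP^N}(1)$ identifies $\Ii_{(2S,C)}(1)$ with $\mathcal O_C(D-2S)$. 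Using $h^1(\mathcal O_C(D))=0$ together with $0\to \mathcal O_C(D-2S)\to \mathcal O_C(D)\to \mathcal O_{2S}\to 0$, one obtains $h^0(\Ii_{(2S,C)}(1))=h^0(\mathcal O_C(D-2S))$ and $h^1(\Ii_{(2S,C)}(1))=h^1(\mathcal O_C(D-2S))$. Thus it suffices to produce $S$ for which $\mathcal O_C(D-2S)$ has both $h^0>0$ and $h^1>0$.

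The key idea is to arrange $\mathcal O_C(D-2S)\cong \mathcal O_C$: then $h^0=1>0$ and, crucially, $h^1(\mathcal O_C)=p_a(C)=g>0$ by the hypothesis $g>0$, so both conditions hold at once. Concretely, I look for a line bundle $N$ of degree $d/2$ with $N^{\otimes 2}\cong \mathcal O_C(D)$; here $d/2$ is an integer precisely because $d$ is assumed even. Such a square root exists because multiplication by $2$ is surjective on $\mathrm{Pic}^0(C)$: on the abelian part $\mathrm{Pic}^0(\widetilde C)$ (with $\widetilde C$ the normalization) and on the $\mathbb{G}_m$-factors of the generalized Jacobian it is surjective in every characteristic, while on the $\mathbb{G}_a$-factors arising from the singularities of $C$ surjectivity is exactly where the assumption $\mathrm{char}(\mathbb K)\neq 2$ is used.

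Given such an $N$, note $\deg N=d/2\ge 2g+1=2p_a(C)+1$, so $N$ is very ample. I then take $S$ to be a general member of $|N|$: a general hyperplane section of the embedded curve $\nu_N(C)$ avoids the finitely many singular points of $C$ and, as $\nu_N(C)$ is not strange in this degree range, meets the curve in $k=d/2$ distinct smooth points; in particular $S$ is reduced and supported on $C_{\mathrm{reg}}$. For this $S$ one has $\mathcal O_C(S)\cong N$, whence $\mathcal O_C(2S)\cong N^{\otimes 2}\cong \mathcal O_C(D)$, i.e. $D-2S\sim 0$. By the first paragraph $h^0(\Ii_{(2S,C)}(1))=1>0$ and $h^1(\Ii_{(2S,C)}(1))=g>0$, so $S\in \TT_{k}(\nu_D(C))$ with $k=d/2$, and this locus is non-empty.

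I expect the main obstacle to be not the cohomological bookkeeping but the passage from an \emph{abstract} square root in $\mathrm{Pic}(C)$ to a \emph{genuine reduced} divisor $2S$ supported on distinct smooth points of the possibly singular curve $C$. This is exactly what the numerical bound $d\ge 4g+2$ buys: it forces $\deg N\ge 2p_a(C)+1$ and hence very ampleness of $N$, after which a Bertini-type general-member argument yields the required reduced $S$. The second delicate point is the existence of the square root itself, which is where integrality of $C$ and $\mathrm{char}(\mathbb K)\neq 2$ are needed.
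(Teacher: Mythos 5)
Your proposal is correct and follows essentially the same route as the paper: find a square root $R$ of $F+mL$ using surjectivity of multiplication by $2$ on $\mathrm{Pic}^0(C)$ (where $\mathrm{char}(\mathbb K)\neq 2$ and integrality enter), note $\deg R\geq 2g+1$ forces very ampleness so a general $S\in|R|$ is reduced and supported on $C_{\mathrm{reg}}$, and conclude from $2S\in|F+mL|$ together with $g>0$ that both Terracini conditions hold. The only cosmetic difference is that you phrase the final step via $h^i(\mathcal O_C(D-2S))$ with $D-2S\sim 0$, while the paper counts conditions imposed by the hyperplane section $2S$; these are equivalent.
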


\begin{proof}
Recall that a line bundle  $E$ on $C$ is very ample if $\deg(E)\geq 2g+1$ \cite[Corollary 3.2, Chapter IV]{Hart}. Since the Picard group $\mathrm{Pic}^0(C)$ is a quasi-projective irreducible group and $\mathrm{char}(\mathbb K)\neq 2$, the kernel of the multiplication morphism $\otimes 2: \mathrm{Pic}^0(C)\to \mathrm{Pic}^0(C)$ is finite. So this morphism is surjective. Since $\deg (F+mL)$ is even and $\otimes 2$ is surjective, there is a line bundle $R_m$ such that
$R_m^{\otimes 2} \cong F+mL$. Thus $\deg (R_m) = (\beta+m\alpha)/2$.
Since $\beta +m\alpha \ge 4g+2$, the line bundle $R_m$ is very ample. Thus $|R_m|\ne \emptyset$ and a general $S\in |R_m|$ consists of $k$ distinct reduced points and $S\subset C_{\reg}$. Note that $2S\in |F+mL|$ and hence $\langle 2\nu_{F+mL}(S)\rangle \subsetneq |F+mL|^{\vee}$ is a hyperplane. Since $\deg (F+mL) >2g-1$, one has $h^0(F+mL) = \deg (F+mL) +1-g=\deg(2S)+1-g$. Since $g>0$, $2S$ does not give $\deg (2S)$ independent conditions to $|F+mL|$. Then, by definition, $\nu_{F+mL}(S)$ is in the $k$th Terracini locus of $\nu_{F+mL}(C)$.
\end{proof}

\begin{corollary}\label{uu3}
Let $C\subset \PP^N$ be an integral and non-degenerate projective curve with arithmetic genus $g=1$ of degree $d$ over $\mathbb K$, with $\mathrm{char}(\mathbb K)\neq 2$. If $d'\geq d+1-N$ and $d\cdot d'$ is even, then $\TT_{d\cdot d'/2}(\nu_d(C))\ne \emptyset$. If $d\cdot d'$ is odd, then all Terracini loci of $\nu_{d'}(C)$ are empty. 
\begin{proof}
Since $d'\ge d+1-N$, we have $h^1(\Ii _C(d')) =0$ \cite[Theorem p. 492]{GLP}. Hence $\nu_{d'}(C)$ is an embedding of $C$ by a complete linear system. 
By Proposition \ref{arithgenus>0}, if $d\cdot d'$ is even, then $\TT_{d\cdot d'/2}(\nu_{d'}(C))\ne \emptyset$. 

Suppose a line bundle $L$ on $C$ has $\deg(L)=2m+1$; let $S\subset C_{\mathrm{reg}}$ have cardinality $k$. Then $\deg(L(-2S)) = 2(m-k)+1\neq 0$. If $\deg(L(-2S))<0$, then $h^0(L(-2S)) = 0$. If $\deg(L(-2S))>0$, by Serre duality, we find $h^1(L(-2S)) = 0$. Therefore any Terracini locus is empty. 
\end{proof}
\end{corollary}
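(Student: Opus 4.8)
The plan is to transfer the whole question to the intrinsic geometry of the abstract curve $C$ and its line bundles, after which the even case follows from Proposition \ref{arithgenus>0} and the odd case from a parity obstruction. First I would invoke the hypothesis $d'\ge d+1-N$: by the Gruson--Lazarsfeld--Peskine bound \cite{GLP} this gives $h^1(\Ii_C(d'))=0$, so the restriction map $H^0(\PP^N,\Oo(d'))\to H^0(C,\Oo_C(d'))$ is surjective and $\nu_{d'}(C)$ is the embedding of $C$ by the \emph{complete} linear system $|\Oo_C(d')|$, a line bundle of degree $d\cdot d'$. This completeness is exactly what lets me rewrite the ambient conditions defining $\TT_k$ in terms of cohomology on $C$.

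Next I would carry out that rewriting. Fix $S\subset C_{\reg}$ of cardinality $k$, so that the divisor $2S$ on $C$ has degree $2k$, and consider the sequence $0\to \Oo_C(d')(-2S)\to \Oo_C(d')\to \Oo_C(d')|_{2S}\to 0$. Since $\deg \Oo_C(d')=dd'>0=2g-2$ we have $h^1(\Oo_C(d'))=0$, and combining this with the non-degeneracy and completeness of the embedding I would identify $h^0(\Ii_{(2S,C)}(1))=h^0(\Oo_C(d')(-2S))$ and $h^1(\Ii_{(2S,C)}(1))=h^1(\Oo_C(d')(-2S))$. Hence $S\in \TT_k(\nu_{d'}(C))$ if and only if both $h^0(\Oo_C(d')(-2S))>0$ and $h^1(\Oo_C(d')(-2S))>0$.

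With this criterion the odd case is immediate: $\deg \Oo_C(d')(-2S)=dd'-2k$ is odd, hence nonzero. If it is negative then $h^0=0$, and if it is positive then $\deg>2g-2=0$, so Serre duality forces $h^1=0$, as its Serre-dual sheaf has negative degree on the genus-one curve $C$. In either case one of the two inequalities fails for every $S$ and every $k$, so all Terracini loci of $\nu_{d'}(C)$ are empty.

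For the even case I would apply Proposition \ref{arithgenus>0} with $L=\Oo_C(1)$ (ample, of degree $\alpha=d$), with $F=\Oo_C$ (so $\beta=0$), and with $m=d'$, so that $F+mL=\Oo_C(d')$ has the even degree $dd'$ and $k=dd'/2$. The only hypothesis requiring attention, and the step I expect to be the main obstacle, is the numerical bound $\beta+m\alpha=dd'\ge 4g+2=6$. I would settle this from the geometry of $C$: a non-degenerate integral curve of degree $N$ in $\PP^N$ is a rational normal curve, so the genus-one hypothesis forces $d\ge N+1$; then $d'\ge d+1-N\ge 2$ and $d\ge N+1\ge 3$, whence $dd'\ge 6$. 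Proposition \ref{arithgenus>0} then produces a non-empty $\TT_{dd'/2}(\nu_{d'}(C))$, which completes the argument.
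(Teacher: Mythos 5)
Your proof is correct and follows essentially the same route as the paper's: the Gruson--Lazarsfeld--Peskine vanishing to get completeness of the linear system, Proposition \ref{arithgenus>0} for the even case, and the parity/Serre-duality obstruction on $\Oo_C(d')(-2S)$ for the odd case. The only genuine addition is your explicit verification that $dd'\ge 4g+2=6$ (via $d\ge N+1$ and hence $d'\ge 2$), a hypothesis of Proposition \ref{arithgenus>0} that the paper applies without comment.
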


\begin{small}
\noindent {\bf Acknowledgements.}
The open-ended problem of looking at very ample Terracini loci was proposed by Luca Chiantini and Ciro Ciliberto (along with many other interesting problems) during the {\it Geometry of Secants Workshop} held in October 2022, within the AGATES semester at University of Warsaw and IMPAN. We warmly thank Chiantini, Ciliberto, and the organizers of the semester and of the workshop. This work is supported by  the Thematic Research Programme ``Tensors: geometry, complexity and quantum entanglement'', University of Warsaw, Excellence Initiative -- Research University and the Simons Foundation Award No. 663281 granted to the Institute of Mathematics of the Polish Academy of Sciences for the years 2021-2023. \\
We thank an anonymous referee for useful comments and corrections. 
\end{small}

\end{document}